\newcommand{\bib}{paper.bib}
\newtheoremstyle{bfnoteonly}%
{}{}%
{\itshape}{}%
{\bfseries}{.}%
{ }%
{\thmnote{#3}}
\theoremstyle{bfnoteonly}
\numberwithin{equation}{section}
\let\oldforall\forall
\renewcommand{\forall}{\oldforall \, }
\let\oldexist\exists
\renewcommand{\exists}{\oldexist \: }
\DeclarePairedDelimiterX\Set[1]\{\}{%

#1
}
\DeclareMathSymbol{\intprod}{\mathbin}{MnSyC}{'270}
\newcommand{\rnum}[1]{\MakeUppercase{\romannumeral #1}}
\begin{document}

% Enter title:
\title{Energy functionals on almost \\ K\"ahler manifolds: I}

% Enter authors:

\author{Ken Wang, Zuyi Zhang, and Jiuru Zhou
%
% Enter affiliations and acknowledgements:
\thanks{ \emph{2020 Mathematics Subject Classification.} 32Q60, 53C15, 53D05. \\ \emph{Keywords.} Almost K\"ahler manifolds, Aubin functional, $\mathcal{D}_J^+$ operator, Donaldson gauge functional. \\ This work was partially supported by NSFC (China) Grants 12171417, 1197112.}}

% Enter date:
\predate{}
\postdate{\vspace{-1em}}
\date{}   

% Enter permanent URL (can be commented out):
%\available{https://github.com/pmichaillat/latex-paper}

% Enter affiliations and emails:
\newcommand{\Addresses}{{% additional braces for segregating \footnotesize
  \bigskip
  \footnotesize

  \textsc{School of Mathematical Sciences, Fudan University, Shanghai 100433, China}\par\nopagebreak
  \textit{E-mail address}: \url{kanwang22@m.fudan.edu.cn}

  \medskip

  \textsc{Beijing International Center for Mathematical Research, China} \par\nopagebreak
  \textit{E-mail address}: \url{zhangzuyi1993@hotmail.com}

  \medskip

   \textsc{School of Mathematical Sciences, Yangzhou University, Yangzhou 225002, Jiangsu, China}\par\nopagebreak
  \textit{E-mail address}: \url{zhoujr1982@hotmail.com}
}}

\maketitle

% Enter abstract:
\begin{abstract}
    In this paper, we consider the Donaldson gauge functional and the twisted Aubin functionals on almost K\"ahler manifolds. As in K\"ahler geometry, we generalize the inequality between Aubin functionals.
%   \smallskip

%   \noindent\textbf{Keywords.} Almost K\"ahler manifolds, Aubin functional, $\mathcal{D}_J^+$ operator, Donaldson gauge functional.
\end{abstract}

% Enter main text:
%%%%%%%%%%%%%%%%%%%%%%%%%%%%%%%%%%%%%%%%%%%%%%%%%%%%%%%%%%%%%%%%%%%%%%
\section{Introduction}
%%%%%%%%%%%%%%%%%%%%%%%%%%%%%%%%%%%%%%%%%%%%%%%%%%%%%%%%%%%%%%%%%%%%%%

In K\"ahler geometry, energy functionals---such as the Mabuchi functional \cite{Bando87} and the Aubin functionals \cite{Aubin84,Tian1987}---play a central role in the study of canonical metrics.  Bando and Mabuchi \cite{Bando87} proved that if a Fano manifold admits a K\"ahler--Einstein metric, then the Mabuchi functional is bounded from below; moreover, Tian's $\alpha$-invariant \cite{Tian1987} provides a key analytic criterion in this setting.  The Aubin functionals are also closely related to the $K$-energy and to geometric flows, including the $J$-flow introduced by Donaldson \cite{Donaldson99}; see, for instance, \cite{Chen00,Weinkove06,DingT92}.

From a variational viewpoint, these energies translate existence and uniqueness questions into coercivity, convexity, and properness properties along natural paths in the space of K\"ahler potentials.  Alongside Mabuchi's $K$-energy \cite{Mabuchi86}, the Ding functional \cite{Ding88} plays a parallel role in K\"ahler--Einstein geometry, and both interact with obstructions such as the Futaki invariant \cite{Futaki83}.  For constant scalar curvature and extremal K\"ahler metrics, the Calabi functional and its lower bounds \cite{Donaldson2005} further motivate an energy-theoretic approach, closely related to the moment-map picture on infinite-dimensional symmetric spaces \cite{Donaldson99,donaldson1999}.

In this paper we study generalizations of the Donaldson gauge functional and of the Aubin functionals in the almost K\"ahler setting.  Using the operator ${\mathcal{D}_J^+}$ introduced by Tan et al.\ \cite{TanWZZ22}, which extends the role of $\partial_J\bar\partial_J$, we define twisted Aubin-type functionals on almost K\"ahler manifolds and investigate their relationship with the Hermitian Donaldson gauge functional.  In particular, we establish an analogue of the classical inequality between Aubin functionals \cite{Aubin76}:
\begin{theorem} \label{thm:3.1}
    For the twisted Aubin functionals defined on an almost K\"ahler manifold $(M^{2m},J,g,\omega)$, the following relations hold:
    \[
    0\le\frac{1}{m+1}I'_\omega(\phi)\le J'_\omega(\phi)\le\frac{m}{m+1}I'_\omega(\phi)
    \]
    for any $\phi\in \mathcal{H}(\omega,J)$, where $\mathcal{H}(\omega,J)$ is the space of almost K\"ahler potentials. In particular, the equality, in any of the above three inequalities, holds if and only if $\phi$ is constant.
\end{theorem}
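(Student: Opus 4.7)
The plan is to reduce everything to a classical telescoping argument by expanding $\omega_\phi^m - \omega^m$ algebraically and then invoking integration by parts against the $\mathcal{D}_J^+$ operator. Writing $\omega_\phi = \omega + \mathcal{D}_J^+\phi$ for $\phi \in \mathcal{H}(\omega,J)$, the starting point is the factorization
\[
\omega_\phi^m - \omega^m \;=\; \mathcal{D}_J^+\phi \,\wedge\, \sum_{k=0}^{m-1} \omega^{m-1-k}\wedge \omega_\phi^k,
\]
a formal consequence of $a^m-b^m=(a-b)\sum a^{m-1-k}b^k$ applied to commuting $2$-forms. Substituting this into the integral defining $I'_\omega(\phi)$ and integrating by parts against the closed $(2m-2)$-forms $\omega^{m-1-k}\wedge\omega_\phi^k$, one should obtain
\[
I'_\omega(\phi) \;=\; \frac{1}{V}\sum_{k=0}^{m-1} A_k, \qquad A_k := \int_M d\phi\wedge d^c\phi\wedge \omega^{m-1-k}\wedge\omega_\phi^k,
\]
where $d^c\phi := -d\phi\circ J$ and $V$ is the symplectic volume, up to a universal constant fixed by the normalization of $\mathcal{D}_J^+$.

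For $J'_\omega(\phi)$, I would follow a Mabuchi-type route: starting from the cocycle formula $J'_\omega(\phi) = \tfrac{1}{V}\int_M \phi\,\omega^m - \tfrac{1}{(m+1)V}\sum_{k=0}^{m}\int_M \phi\,\omega^{m-k}\wedge\omega_\phi^k$ (suitably adapted to the almost K\"ahler setting), one rewrites each difference $\omega^m - \omega^{m-k}\wedge\omega_\phi^k$ via the same telescoping identity and integrates by parts term by term. Counting how often each monomial $\omega^{m-1-j}\wedge\omega_\phi^j$ appears in the resulting double sum yields multiplicity $m-j$, producing
\[
J'_\omega(\phi) \;=\; \frac{1}{V}\sum_{k=0}^{m-1}\frac{m-k}{m+1}\,A_k.
\]
Once both expansions are in place, the three inequalities become purely combinatorial: $I'_\omega-J'_\omega = \tfrac{1}{V}\sum \tfrac{k+1}{m+1} A_k$, $(m+1)J'_\omega - I'_\omega = \tfrac{1}{V}\sum (m-1-k)A_k$, and $mI'_\omega - (m+1)J'_\omega = \tfrac{1}{V}\sum k\,A_k$, each visibly non-negative provided every $A_k \ge 0$.

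The positivity of $A_k$ is pointwise: $d\phi\wedge d^c\phi$ is a non-negative real $J$-invariant $(1,1)$-form on any almost complex manifold, and wedging with the positive $(1,1)$-forms $\omega$ and $\omega_\phi$ preserves non-negativity in top degree. For the equality characterization, the lower bound $\tfrac{1}{m+1}I'_\omega=0$ collapses to all $A_k=0$; the middle equality forces $A_0=\cdots=A_{m-2}=0$; and the upper equality forces $A_1=\cdots=A_{m-1}=0$. In every case either $A_0$ or $A_{m-1}$ vanishes, and since each such integral is a positive multiple of $\int_M |\nabla\phi|^2\,\omega^m$ (resp.\ the analogous integral computed against $\omega_\phi^m$), we conclude $d\phi=0$, i.e., $\phi$ is constant.

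The anticipated main obstacle is the integration-by-parts step rather than the combinatorics. Since $J$ is not integrable, $\mathcal{D}_J^+$ is not literally $i\partial_J\bar\partial_J$, and the identity $\int_M \phi\,\mathcal{D}_J^+\phi\wedge\eta = -\int_M d\phi\wedge d^c\phi\wedge\eta$ for $d$-closed $\eta$ has to be extracted from the structural properties of $\mathcal{D}_J^+$ recorded in \cite{TanWZZ22}. One also needs that $\omega_\phi$ is a genuine positive $(1,1)$-form for every $\phi\in\mathcal{H}(\omega,J)$, so that the mixed powers $\omega^{m-1-k}\wedge\omega_\phi^k$ are bona fide non-negative $(2m-2)$-forms. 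Once these two ingredients are secured, the remainder of the proof is dictated by the classical argument of Aubin.
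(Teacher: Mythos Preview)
Your combinatorial outline is exactly the one the paper uses, but the step you flag as ``the anticipated main obstacle'' is in fact a genuine gap that cannot be closed the way you hope. The identity $\int_M \phi\,\mathcal{D}_J^+\phi\wedge\eta = -\int_M d\phi\wedge d^c\phi\wedge\eta$ is \emph{false} in the non-integrable case: since $\mathcal{D}_J^+\phi = dJd\phi + dd^*\sigma(\phi)$, Stokes against a closed $\eta$ leaves an uncontrolled cross term $-\int_M d\phi\wedge d^*\sigma(\phi)\wedge\eta$, and $d^*\sigma(\phi)=(m-1)J\partial_-\sigma(\phi)$ has no reason to vanish or to pair with $d\phi$ with a definite sign. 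Consequently your claimed expansion $I'_\omega(\phi)=\tfrac{1}{V}\sum A_k$ is not available, and the argument stalls precisely where you predicted it might.

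The paper's resolution is not an integration-by-parts trick but a redefinition: the twisted functionals $I'_\omega,\,J'_\omega$ are \emph{defined} as sums of two families of terms,
\[
-\int_M \phi\,dJd\phi\wedge\omega^{m-1-k}\wedge\omega_1^k
\qquad\text{and}\qquad
-(m-1)^2\int_M J\partial_-\sigma(\phi)\wedge\partial_-\sigma(\phi)\wedge\omega^{m-1-k}\wedge\omega_1^k,
\]
weighted by $1$ (for $I'_\omega$) and by $\tfrac{m-k}{m+1}$ (for $J'_\omega$). The first family is your $A_k$ and is handled by the pointwise positivity you describe (the paper's Lemma~\ref{lem:2}); the second family requires a separate and nontrivial computation (the paper's Lemma~\ref{lem:5}), which unwinds $d^*\sigma(\phi)$ via the primitive decomposition and checks positivity in a $J$-adapted frame. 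Once both building blocks are known to be nonnegative, your linear-combination argument goes through verbatim. So the missing idea is not an identity hidden in \cite{TanWZZ22}, but the recognition that the ``twisted'' functionals carry an explicit $\sigma(\phi)$-correction whose sign must be established independently.
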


The paper is organized as follows. Section~2 introduces the operator $\mathcal{D}_J^+$ and recalls the primitive cohomologies of \cite{TsengY2012a,TsengY2012b}. Section~3 presents the Hermitian Donaldson gauge functional. Section~4 introduces the Hermitian Aubin functionals and proves \cref{thm:3.1}.

In a subsequent paper \cite{WangZ2025}, we will study the Mabuchi functional \cite{Mabuchi86}, the Ding functional \cite{Ding88}, and Tian's $\alpha$-invariant \cite{Tian1987} in the context of almost K\"ahler geometry.

%%%%%%%%%%%%%%%%%%%%%%%%%%%%%%%%%%%%%%%%%%%%%%%%%%%%%%%%%%%%%%%%%%%%%%
\section{Preliminary}\label{sec:preli}
%%%%%%%%%%%%%%%%%%%%%%%%%%%%%%%%%%%%%%%%%%%%%%%%%%%%%%%%%%%%%%%%%%%%%%

In this section we recall basic notions and introduce the operator ${\mathcal{D}_J^+}$ on higher-dimensional almost K\"ahler manifolds \cite{TanWWZ2025}.  We also review the Lefschetz and symplectic operators, following the framework of primitive cohomology in \cite{TsengY2012a,TsengY2012b}.  For standard material we refer to \cite{Aubin2012,Gauduchon2010}.

Let $(M,J)$ be an almost complex manifold of real dimension $2m$.  We write $C^\infty(M)$ for the space of smooth functions on $M$, let $\Lambda^{p,q}_J$ denote the bundle of $(p,q)$-forms with respect to $J$, and set $\Omega^{p,q}(M):=\Gamma(M,\Lambda^{p,q}_J)$.  The space of smooth $k$-forms is then $\Omega^{k}(M):=\bigoplus_{p+q=k}\Omega^{p,q}(M)$.

The almost complex structure $J$ acts on real $k$-forms by the involution
\begin{equation*}
    J:\alpha(\cdot,\ldots,\cdot) \mapsto (-1)^k\alpha(J\cdot,\ldots,J\cdot),\quad \alpha\in\Omega^k(M).
\end{equation*}
When $k=2$, this yields a splitting of $\Omega^2(M)$ into $J$-invariant and $J$-anti-invariant 2-forms,
\begin{equation*}
    \Omega^2 = \Omega_J^+ \oplus \Omega_J^-.
\end{equation*}
where $\Omega_J^+$ and $\Omega_J^-$ denote the spaces of $J$-invariant and $J$-anti-invariant 2-forms, respectively.  We then define the operators
\begin{equation*}
    \begin{aligned}
    d_J^+ &:= P_J^+d: \Omega^1 \longrightarrow \Omega_J^+, \\
    d_J^- &:= P_J^-d: \Omega^1 \longrightarrow \Omega_J^-,
    \end{aligned}
\end{equation*}
where $P_J^{\pm} = \frac{1}{2}(1\pm J)$ are algebraic projections on $\Omega^2(M)$.

Now suppose $(M^{2m},J,g,\omega)$ is an almost K\"ahler manifold; that is, $g$ is a $J$-compatible Riemannian metric and $\omega(\cdot,\cdot)=g(J\cdot,\cdot)$ is a closed $2$-form.  The adjoint operator $d^*:\Omega^k\rightarrow\Omega^{k-1}$ is defined by
\begin{equation*}
d^*=(-1)^{k+1}*_gd*_g,
\end{equation*}
where $*_g$ is the Hodge star operator defined using $g$. Consider the operator 
\begin{equation*}
d^-_{J}d^*:\Omega_J^-(M)\longrightarrow\Omega_J^-(M).
\end{equation*}
One checks that for all $\alpha,\beta\in\Omega_J^-(M)$,
\begin{equation*}
\langle d^-_Jd^*\alpha,\beta\rangle_g=\langle dd^*\alpha,\beta\rangle_g=\langle \alpha,dd^*\beta\rangle_g=\langle \alpha,d^-_Jd^*\beta\rangle_g,
\end{equation*}
hence $d^-_Jd^*$ is self-adjoint with respect to the $L^2$-inner product induced by $g$. Moreover, $d^-_Jd^*$ can be extended to a closed densely defined operator
\begin{equation*}
d^-_Jd^*:\Omega_J^-\otimes L_2^2(M)\longrightarrow \Omega_J^-\otimes L^2(M).
\end{equation*}

%\noindent Let $(M,\omega,J,g)$ be a closed almost Hermitian $2m$-manifold. Denote by
%$$
%C^\infty(M)_0:=\{f\in C^\infty(M)|\int_Mfd\mu_g=0\},
%$$
%see Tan-Wang-Wang \cite{TanWWZ2025}.

The following operator was first introduced by Lejmi \cite{Lejmi10E,Lejmi10S}.
\begin{proposition}[\cite{Lejmi10E}]
Let $(M,\omega,J,g)$ be a closed almost Hermitian 4-manifold. Define the following operator
\begin{align*}
    P: \Omega^-_J &\longrightarrow \Omega^-_J\\
    \psi&\longmapsto P^-_J(dd^*\psi).
\end{align*}
Then $P$ is a self-adjoint, strongly elliptic linear operator with a kernel consisting of $g$-self-dual-harmonic, $J$-anti-invariant 2-forms. Hence,
\begin{equation*}
    \Omega^-_J=\ker P \oplus d^-_J\Omega^1_\R.
\end{equation*}
\end{proposition}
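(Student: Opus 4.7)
The plan is to verify the four assertions---self-adjointness, strong ellipticity, the description of the kernel, and the orthogonal decomposition---in that order. Throughout I will use that on a 4-manifold every $J$-anti-invariant 2-form is $g$-self-dual, i.e., $\Omega^-_J \subset \Lambda^+_g$, and that for $\psi \in \Omega^-_J$ and any 2-form $\eta$ one has $\langle P^-_J \eta, \psi \rangle_g = \langle \eta, \psi \rangle_g$.

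\emph{Self-adjointness} is the quickest part. For $\psi,\phi\in\Omega^-_J$ I would compute
\[
\langle P\psi,\phi\rangle_g = \langle P^-_J dd^*\psi,\phi\rangle_g = \langle dd^*\psi,\phi\rangle_g = \langle d^*\psi,d^*\phi\rangle_g,
\]
which is symmetric in $\psi,\phi$ on a closed manifold.

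\emph{Strong ellipticity} is the step where the 4-dimensional almost Hermitian geometry plays its crucial role, and this is the main technical point. I would compute the principal symbol directly: at a nonzero cotangent vector $\xi$, $\sigma_\xi(d)=\xi\wedge\cdot$ and $\sigma_\xi(d^*)=-\iota_{\xi^\sharp}$, so $\sigma_\xi(P)\psi = -P^-_J(\xi\wedge\iota_{\xi^\sharp}\psi)$. Pairing with $\psi\in\Omega^-_J$ and using the adjunction between wedge and contraction,
\[
\langle \sigma_\xi(P)\psi,\psi\rangle_g = -\langle \xi\wedge\iota_{\xi^\sharp}\psi,\psi\rangle_g = -|\iota_{\xi^\sharp}\psi|_g^2.
\]
I would then invoke the fact that any self-dual 2-form $\psi$ on a 4-manifold satisfies $\psi\wedge\psi = |\psi|_g^2\,\mathrm{vol}_g$, so that a nonzero $\psi\in\Omega^-_J$ is nondegenerate as an alternating form; in particular $\iota_{\xi^\sharp}\psi\neq 0$ for every $\xi\neq 0$. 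Hence the symbol is negative definite on $\Omega^-_J$, giving strong ellipticity. The main obstacle I expect is checking these sign conventions and confirming that the non-degeneracy argument is uniform in $\xi$; everything beyond is formal.

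\emph{Kernel and decomposition.} If $P\psi=0$, pairing with $\psi$ and using self-adjointness already established shows $\|d^*\psi\|_g^2=0$, hence $d^*\psi=0$; since $\psi$ is self-dual, $d\psi = d\hodge\psi = \pm\hodge d^*\psi = 0$, so $\psi$ is a $g$-harmonic self-dual $J$-anti-invariant 2-form. Strong ellipticity plus self-adjointness now yield the standard $L^2$-orthogonal splitting $\Omega^-_J = \ker P \oplus \mathrm{Im}\,P$. To identify $\mathrm{Im}\,P$ with $d^-_J\Omega^1_\R$, I would note that $P\psi = P^-_J d(d^*\psi) \in d^-_J\Omega^1_\R$ trivially, giving one inclusion. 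For the other, write any $d^-_J\alpha = \eta + P\psi$ with $\eta\in\ker P$; since $\eta$ is $J$-anti-invariant and closed and co-closed,
\[
\langle \eta, d^-_J\alpha\rangle_g = \langle \eta, d\alpha\rangle_g = \langle d^*\eta,\alpha\rangle_g = 0,
\]
while $\langle \eta, P\psi\rangle_g = \langle P\eta,\psi\rangle_g = 0$; thus $\|\eta\|_g^2 = 0$ and $d^-_J\alpha = P\psi \in \mathrm{Im}\,P$, completing the proof.
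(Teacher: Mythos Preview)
The paper does not supply its own proof of this proposition: it is quoted verbatim from Lejmi \cite{Lejmi10E} and used as a black box to set up the $\mathcal{D}_J^+$ operator. So there is nothing in the present paper to compare your argument against.

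That said, your argument is correct and is essentially the standard one. The only places worth a second look are purely cosmetic: the symbol you obtain is \emph{negative} definite, so strictly speaking it is $-P$ that is strongly elliptic in the usual convention, but this is immaterial for the Fredholm and Hodge-type conclusions you draw; and in the kernel step you might state explicitly that on a $4$-manifold $\ast\psi=\psi$ together with $d^{*}\psi=0$ gives $d\psi=\ast d^{*}\psi=0$, rather than leaving the sign as $\pm$. Your identification $\mathrm{Im}\,P=d^-_J\Omega^1_\R$ via the orthogonality of $\ker P$ to both pieces is clean and complete.
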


Using this operator $P$, Tan et al. \cite{TanWZZ22} introduced the ${\mathcal{D}_J^+}$ operator as follows:
\begin{definition}
    Suppose $(M,\omega,J,g)$ is a closed almost K\"ahler 4-manifold. Define ${\mathcal{D}_J^+}$ as follows
    \begin{equation*}
        \begin{aligned}
        {\mathcal{D}_J^+}:C^\infty(M)_0 &\longrightarrow \Omega_J^+(M)\\
        {\mathcal{D}_J^+}(f) =dJdf+dd^* \sigma(f) &=dd^*(f\omega)+dd^*\sigma(f),
    \end{aligned}
    \end{equation*}
    
    where $\sigma(f)\in\Omega^-_J$ satisfies
    \begin{equation*}
        d^-_Jd^*(f\omega)+d^-_Jd^*\sigma(f)=0.
    \end{equation*}
    
\end{definition}
\begin{remark}
    In the original formulation in \cite{TanWZZ22}, the auxiliary operator
    \begin{equation*}
        \begin{aligned}
        {\mathcal{W}_J}:C^\infty(M)_0&\longrightarrow\Omega^1(M)\\
        f\phantom{(M)} &\longmapsto d^*(f\omega+\sigma(f)).
        \end{aligned}
    \end{equation*}
    is introduced. It satisfies
    \begin{equation*}
        d{\mathcal{W}_J}(f)={\mathcal{D}_J^+}(f), \quad d_J^-{\mathcal{W}_J}(f)=0.
    \end{equation*}
\end{remark}
One may further generalize the operator $\mathcal{D}_J^+$ to higher-dimensional almost K\"ahler manifolds; see \cite{TanWWZ2025}.  More precisely, for $f\in C^{\infty}(M)$, we define
\begin{equation*}
    \mathcal{D}_J^+(f) = dJdf + dd^*\sigma(f),
\end{equation*}
where $\sigma(f) \in \Omega_J^-(M)$ is chosen so that $d_J^-Jdf + d_J^-d^*\sigma(f) = 0$. Note that $\mathcal{D}_J^+(f)$ is $d$-exact. If $J$ is integrable, then $D_J^+(f) = 2\sqrt{-1}\partial_J \bar{\partial}_Jf$.

\begin{definition}[Tan et al. \cite{TanWWZ2025}]
    The space of almost K\"ahler potentials is defined by
    \begin{equation*}
    \mathcal{H}(\omega,J)=\{\phi\in C^{\infty}(M,\R)|\ \omega+\mathcal{D}_J^+\phi > 0 \}.
    \end{equation*}
\end{definition}
This space is contractible, infinite-dimensional, and affine.  It is an open subset of $C^{\infty}(M)$; hence for any $\phi \in \mathcal{H}(\omega,J)$ the tangent space $T_{\phi}\mathcal{H}(\omega,J)$ is naturally identified with $C^{\infty}(M)$.

\begin{definition}
    Suppose $\omega_1$ is another almost K\"ahler form on $M$. We say that $\omega$ and $\omega_1$ are \emph{related by an almost K\"ahler potential} if there is a smooth function $\phi\in  \mathcal{H}(\omega,J)$ such that 
    \begin{equation*}
    \omega_1=\omega+{\mathcal{D}_J^+}(\phi)>0.
    \end{equation*}
\end{definition}

In \cite{TanWWZ2025} it is shown that if $\phi_0$ solves the Laplace equation (in general, $\phi_0\neq\phi$), then
\begin{equation}\label{equ:lap}
    -\frac{1}{m}\Delta_{g}\phi_0=\frac{\omega^{m-1}\wedge(\omega_1-\omega)}{\omega^m}=\frac{\omega^{m-1}\wedge{\mathcal{D}_J^+}(\phi)}{\omega^m},
\end{equation}
then one can rewrite ${\mathcal{D}_J^+}(\phi)$ as
\begin{equation*}
{\mathcal{D}_J^+}(\phi)=dJd\phi_0+da(\phi_0),
\end{equation*}
where $a(\phi_0) = d^*\sigma(\phi_0) \in\Omega_{\R}^1(M)$.

The 1-form $a(\phi_0)$ satisfies the elliptic system.
\begin{equation}\label{equ:ellip}
    \begin{cases}
        d^{*}a(\phi_0) =0,\\
        d^-_Ja(\phi_0)=-d_J^-Jd\phi_0,\\
        \omega^{m-1}\wedge da(\phi_0)=0.
    \end{cases}
\end{equation}
The function $\phi_0$ is also called almost K\"ahler potential. If $J$ is integrable, then
\begin{equation*}
a(\phi_0)=0=\sigma(\phi),\ \ \phi=\phi_0.
\end{equation*}
For $m=2$, the system (\ref{equ:ellip}) reduces to the self-dual equation in Section 4 of \cite{WangZZZ2024}.

We now review primitive cohomology following Tseng–Yau \cite{TsengY2012a, TsengY2012b}. Define the \emph{Lefschetz operator} by 
\begin{equation*}
        \begin{aligned}
            L_\omega:\Omega^k(M)&\longrightarrow \Omega^{k+2}(M)\\
            \alpha\phantom{M}&\longmapsto \omega\wedge\alpha.
        \end{aligned}
\end{equation*}
A $k$-form $\alpha$ is called \emph{$k$-primitive} if 
    \begin{itemize}
        \item $L_\omega^{m-k}\alpha\neq0$,
        \item $L_\omega^{m-k+1}\alpha=0$.
    \end{itemize}
We denote the space of primitive $k$-form by $\mathcal{P}^k(M)$. In particular, every 1-form is 1-primitive.

\begin{proposition}[\cite{TsengY2012a}]
    For any $B_k\in\mathcal{P}^k(M)$, there exists $B_i\in\mathcal{P}^i(M)$ for $i=k\pm1$ such that
    \begin{equation}\label{equ:2.2}
        dB_k=B_{k+1}+L_\omega B_{k-1}.
    \end{equation}
\end{proposition}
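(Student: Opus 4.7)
The plan is to apply the pointwise Lefschetz decomposition to the $(k+1)$-form $dB_k$ and then use the primitivity of $B_k$ together with $d\omega=0$ to kill every Lefschetz component above the first two. Concretely, by the $\mathfrak{sl}(2)$-representation theory acting fibrewise on the exterior bundle, I would begin by writing
\begin{equation*}
dB_k \;=\; \sum_{s\ge 0} \frac{1}{s!}\, L_\omega^{s}\, \gamma_{k+1-2s},
\end{equation*}
where each $\gamma_{k+1-2s}\in\mathcal{P}^{k+1-2s}(M)$. The goal then reduces to showing $\gamma_{k+1-2s}=0$ for every $s\ge 2$; setting $B_{k+1}:=\gamma_{k+1}$ and $B_{k-1}:=\gamma_{k-1}$ would then give the desired identity, and uniqueness of the Lefschetz decomposition guarantees these are the only possible choices.

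The key input is that $\omega$ is closed, so $d$ commutes with $L_\omega$, combined with $L_\omega^{m-k+1}B_k=0$ from the primitivity of $B_k$. Together these yield $L_\omega^{m-k+1}dB_k=0$. Substituting the decomposition, the $s=0$ term $L_\omega^{m-k+1}\gamma_{k+1}$ vanishes because $\gamma_{k+1}$ is primitive of degree $k+1$ (already killed by $L_\omega^{m-k}$), and similarly the $s=1$ term $L_\omega^{m-k+2}\gamma_{k-1}$ vanishes by primitivity of $\gamma_{k-1}$. What remains is
\begin{equation*}
0 \;=\; \sum_{s\ge 2} \frac{1}{s!}\, L_\omega^{s+m-k+1}\gamma_{k+1-2s}
\;=\; L_\omega^{m-k+3}\!\left(\sum_{s\ge 2} \frac{1}{s!}\, L_\omega^{s-2}\gamma_{k+1-2s}\right).
\end{equation*}

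To conclude, I would invoke the pointwise hard Lefschetz statement that $L_\omega^{m-k+3}:\Omega^{k-3}\to\Omega^{2m-k+3}$ is injective (from the $\mathfrak{sl}(2)$-theory, $L_\omega^{j}$ is injective on $\Omega^p$ whenever $j\le m-p$, with equality corresponding to the Lefschetz isomorphism). Therefore the parenthesized sum must vanish, and since it is itself a Lefschetz decomposition of a $(k-3)$-form with primitive pieces $\gamma_{k-3},\gamma_{k-5},\ldots$, uniqueness of the decomposition forces every $\gamma_{k+1-2s}=0$ for $s\ge 2$. The main ``obstacle'' is not really one: it is careful bookkeeping of degree shifts, plus mild attention to edge cases (small $k$, or $k$ near $m$, where $B_{k\pm 1}$ is interpreted as zero). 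Notably, the almost-complex structure $J$ plays no role beyond the closedness and nondegeneracy of $\omega$; the result is a purely symplectic fact.
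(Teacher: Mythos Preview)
The paper does not supply its own proof of this proposition; it is quoted from \cite{TsengY2012a} and stated without argument, so there is nothing in the paper to compare your attempt against directly.

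That said, your argument is correct and is essentially the standard proof of this fact (indeed, it is the argument Tseng--Yau themselves give). The only places worth tightening are cosmetic: when $k\le 2$ the sum over $s\ge 2$ is empty and there is nothing to prove, and when $k=m$ one has $\gamma_{k+1}\in\mathcal{P}^{m+1}=0$ automatically, so $B_{k+1}=0$; you acknowledge these edge cases but could state them explicitly. Your observation that the result is purely symplectic (only $d\omega=0$ and nondegeneracy enter) is also on point.

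One minor friction with the paper's conventions: the paper's definition of primitivity requires $L_\omega^{m-k}\alpha\neq 0$ in addition to $L_\omega^{m-k+1}\alpha=0$, which would technically exclude the zero form; but the proposition as stated clearly intends to allow $B_{k\pm1}=0$, so this is an inconsistency in the paper rather than a flaw in your proof.
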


This proposition gives rise to the \emph{symplectic operators} of $\omega$:
\begin{align*}
    \partial_{\pm}:\mathcal{P}^k(M) &\longrightarrow\mathcal{P}^{k\pm1}(M)\\
    B_k\phantom{M} &\longmapsto B_{k\pm1},
\end{align*}
where $B_{k\pm1}$ is specified by Equation (\ref{equ:2.2}).

The exterior derivative $d:\Omega^k(M)\rightarrow\Omega^{k+1}(M)$ admits the decomposition
\begin{equation*}
d=(A_J,\overline{\partial}_J,\partial_J,\overline{A}_J):\Omega^{p,q}(M)\rightarrow(\Omega^{p-1,q+2}\oplus\Omega^{p,q+1}\oplus\Omega^{p+1,q}\oplus\Omega^{p+2,q-1})(M).
\end{equation*}

\begin{lemma}[\cite{TsengY2012b}]\label{lem:1}
    If $\alpha=\alpha^{2,0}+\alpha^{0,2} \in \Omega_J^-$, then 
    \begin{equation*}
    d\alpha=\partial_{+}\alpha+\omega\wedge\partial_{-}\alpha.
    \end{equation*}
    where $\omega^{m-2}\wedge\partial_{+}\alpha=0$ and $\partial_{-}\alpha \in \Omega^1_{\R}(M)$.
\end{lemma}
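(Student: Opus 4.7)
The plan is to exhibit $\alpha$ as a primitive $2$-form, invoke the general decomposition (equation (2.2)), and then check that the 1-form part in that decomposition is real.

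First I would verify that $\alpha \in \mathcal{P}^2(M)$. Since $\omega$ has bidegree $(1,1)$ and $\alpha^{2,0}\in \Omega^{2,0}(M)$, the form $L_\omega^{m-1}\alpha^{2,0}$ lies in $\Omega^{m+1,m-1}(M)$, which vanishes because no form on a $2m$-dimensional almost complex manifold can have holomorphic degree greater than $m$. The same argument applied to $\alpha^{0,2}$ gives $L_\omega^{m-1}\alpha^{0,2}=0$. Hence $L_\omega^{m-1}\alpha = 0$, so $\alpha$ is annihilated by $L_\omega^{m-1}$; excluding the trivial case $\alpha = 0$ we have $\alpha\in \mathcal{P}^2(M)$.

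With primitivity in hand, Proposition (equation (2.2)) applied to $\alpha$ with $k=2$ yields
\begin{equation*}
    d\alpha = B_3 + L_\omega B_1, \qquad B_3\in \mathcal{P}^3(M),\ B_1\in\mathcal{P}^1(M)=\Omega^1(M).
\end{equation*}
By definition, $\partial_+\alpha = B_3$ and $\partial_-\alpha = B_1$. Primitivity of $B_3$ means $L_\omega^{m-3+1}B_3=0$, i.e.\ $\omega^{m-2}\wedge\partial_+\alpha = 0$, which is the first required property.

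For the reality of $\partial_-\alpha$, I would use the uniqueness of the Lefschetz decomposition: any 3-form $\gamma$ admits a unique decomposition $\gamma = \gamma_3 + \omega\wedge\gamma_1$ with $\gamma_3$ primitive and $\gamma_1\in\Omega^1(M)$. Since $d\alpha$ and $\omega$ are real, applying complex conjugation to $d\alpha = B_3 + \omega\wedge B_1$ produces another such decomposition of $d\alpha$; uniqueness then forces $\overline{B_3}=B_3$ and $\overline{B_1}=B_1$, so $\partial_-\alpha\in\Omega^1_\R(M)$. The main conceptual step is the bidegree-based primitivity observation; once that is in place the rest is a direct consequence of the general Lefschetz/primitive-decomposition machinery already set up in the preceding proposition, so no serious obstacle is anticipated.
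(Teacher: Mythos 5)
Your proposal is correct, but note that the paper itself gives no proof of this lemma --- it is imported verbatim from Tseng--Yau \cite{TsengY2012b} --- so there is no internal argument to compare against. What you have written is a sound, self-contained reconstruction using only the machinery the paper does set up: the bidegree observation that $\omega^{m-1}\wedge(\alpha^{2,0}+\alpha^{0,2})$ lands in $\Omega^{m+1,m-1}\oplus\Omega^{m-1,m+1}=0$, hence $\alpha\in\mathcal{P}^2(M)$, is exactly the step the paper itself uses at the start of its proof of \cref{lem:3.3}; the rest follows from the decomposition $dB_k=B_{k+1}+L_\omega B_{k-1}$ of \cref{equ:2.2} and the definition of $\partial_\pm$. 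Two small points worth making explicit: (i) your conjugation argument needs that $\alpha$ is a \emph{real} form, which holds because $\Omega_J^-$ is by construction a summand of the real 2-forms (so $\alpha^{0,2}=\overline{\alpha^{2,0}}$ and $d\alpha$ is real); (ii) the uniqueness of the Lefschetz decomposition $\Omega^3=\mathcal{P}^3\oplus L_\omega\mathcal{P}^1$ that you invoke is also what makes $\partial_\pm$ well defined in the first place (it is pointwise symplectic linear algebra, including injectivity of $L_\omega$ on 1-forms), so you are not assuming anything beyond what the paper's Proposition already implicitly requires. Your argument also degenerates correctly when $m=2$: primitivity of $B_3$ then forces $\partial_+\alpha=0$, matching the paper's remark that there are no primitive 3-forms on 4-manifolds.
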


\begin{example} \label{exmp:1} \leavevmode
    \begin{enumerate}[(i)]
        \item  From \cref{lem:1},
    \begin{equation*}
        dJd\phi_0 = \partial_+(Jd\phi_0) + \omega \wedge \partial_-(Jd\phi_0).
    \end{equation*}
    Using \cref{equ:ellip},
    \begin{equation*}
        \omega^{m-1} \wedge dJd\phi_0 = \omega^m \partial_-(Jd\phi_0) = -\frac{1}{m}\omega^m\Delta_{g}\phi_0.
    \end{equation*}
    Hence,
    \begin{equation} \label{equ:exmp1}
        \partial_-(Jd\phi_0) = -\frac{1}{m}\Delta_{g}\phi_0, \quad \partial_+(Jd\phi_0) = dJd\phi_0 + \frac{\omega}{m}\Delta_{g}\phi_0.
    \end{equation}

    \item 
    Similarly,
    \begin{equation*}
        0 = \omega^{m-1} \wedge da(\phi_0) = \omega^m \partial_-(a(\phi_0)).
    \end{equation*}
    Thus,
    \begin{equation}\label{equ:exmp2}
    \begin{cases}
        \partial_-(a(\phi_0)) = \partial_-(d^*\sigma(\phi_0)) = 0, \\
        P_J^- \partial_+ (a(\phi_0)) = d_J^-a(\phi_0) = -d_J^- Jd\phi_0.
    \end{cases}
    \end{equation}
    \end{enumerate}
\end{example}

The Hodge star operator $*_g$ and the symplectic Hodge star $*_{\omega}$ are related by the following.
\begin{lemma}[\cite{weil1971, TsengY2012a}]
    For any $\alpha\in\Omega^k(M)$,
    \begin{equation*}
    *_g\alpha=\mathcal{J}(*_{\omega}\alpha):=\sum_{(p,q)}(\sqrt{-1})^{p-q}\prod^{p,q}(*_{\omega}\alpha),
    \end{equation*}
    where $\prod\limits^{p,q}$ denotes projection to $(p,q)$-forms.
\end{lemma}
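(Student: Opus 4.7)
The plan is to verify the identity pointwise. Both $*_g$ and $*_\omega$ are $C^\infty(M)$-linear, and the bidegree projections $\prod^{p,q}$ are pointwise as well, so it suffices to establish the identity on each fiber $\Lambda^k T_x^*M$. Fix $x \in M$; using the compatibility of $g$ and $J$, one can choose a $g$-orthonormal real coframe $\{e^1,\ldots,e^{2m}\}$ at $x$ adapted to $J$, so that $Je^i = e^{m+i}$ and $\omega = \sum_{i=1}^m e^i \wedge e^{m+i}$. The associated $(1,0)$-coframe $\theta^i := e^i + \sqrt{-1}\,e^{m+i}$ makes both the bidegree decomposition and the symplectic structure simultaneously standard.

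By the Lefschetz decomposition on $\Lambda^k T_x^*M$, every form is a finite sum of terms $\alpha = L_\omega^r \beta$ with $\beta$ primitive of pure bidegree $(p,q)$ and $p+q+2r = k$. By linearity of $*_g$, $*_\omega$, and $\mathcal{J}$, it suffices to verify the identity on such an $\alpha$. I would then invoke (or derive in the adapted coframe from the defining pairings $\beta \wedge *_g \alpha = \langle\beta,\alpha\rangle_g\,\mathrm{vol}_g$ and $\beta \wedge *_\omega \alpha = (\beta,\alpha)_\omega\,\omega^m/m!$) the two Weil-type identities
\[
*_\omega(L_\omega^r \beta) \;=\; c_{r,s,m}\, L_\omega^{m-r-s}\beta, \qquad *_g(L_\omega^r \beta) \;=\; (\sqrt{-1})^{p-q}\, c_{r,s,m}\, L_\omega^{m-r-s}\beta,
\]
where $s = p+q$ and $c_{r,s,m}$ is an explicit sign-and-combinatorial factor that is the same in both formulas. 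The difference between them is precisely the phase $(\sqrt{-1})^{p-q}$, arising from the conjugation of $\bar\theta$-factors in the Hermitian pairing being absent in the symplectic one.

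Since $L_\omega$ shifts bidegree by $(1,1)$, the form $L_\omega^{m-r-s}\beta$ is of pure bidegree $(m-r-q,\,m-r-p)$. Thus
\[
\mathcal{J}\bigl(*_\omega (L_\omega^r\beta)\bigr) \;=\; (\sqrt{-1})^{(m-r-q)-(m-r-p)}\,*_\omega (L_\omega^r\beta) \;=\; (\sqrt{-1})^{p-q}\,*_\omega (L_\omega^r\beta) \;=\; *_g(L_\omega^r\beta),
\]
which is the desired equality.

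The main obstacle is establishing the two Weil-type formulas with the correct constants and signs. This is a careful bookkeeping exercise in the adapted coframe: expressing $\omega^j$, the inverse bivector of $\omega$, and the Hermitian pairing $\langle\theta^i,\theta^j\rangle_g = 2\delta^{ij}$ explicitly, and checking the two pairing identities on a single primitive monomial $\theta^I \wedge \bar\theta^{\bar J}$. Once both formulas are in hand, the bidegree computation above concludes the proof immediately; it is the clean alignment of the two Lefschetz–primitive normal forms, rather than any analytic input, that makes the lemma work.
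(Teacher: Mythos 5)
The paper offers no proof of this lemma at all---it is quoted from Weil and Tseng--Yau---so there is no in-paper argument to compare yours against; what can be judged is whether your blind proof stands on its own, and it essentially does. The statement is pointwise linear algebra in $(T_xM, g, J, \omega)$, your Lefschetz-plus-bidegree reduction is valid (the $(p,q)$-projections commute with $L_\omega$, so the primitive pieces of the Lefschetz decomposition split further into primitive pure-bidegree pieces), and the final bookkeeping $(m-r-q)-(m-r-p)=p-q$ is exactly what makes $\mathcal{J}$ supply the correct phase on $*_\omega(L_\omega^r\beta)$. The one ingredient you defer---the two Hodge-star formulas on $L_\omega^r\beta$ with a common combinatorial constant and relative phase $(\sqrt{-1})^{p-q}$---is precisely the content of the last two identities in \cref{lem:lefid}, which the paper also quotes from \cite{TsengY2012b}; invoking those (or carrying out the adapted-coframe computation you sketch, which is routine but lengthy) closes the argument. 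So, modulo a standard fact that the paper itself treats as known, your reduction is complete, correct, and consistent with the toolkit the paper uses elsewhere (e.g.\ in the proof of \cref{lem:3.3}).
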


For the operator $\mathcal{J}:\Omega^k(M)\rightarrow\Omega^k(M)$ above, we compute a case needed later. If $\alpha\in\Omega^{1,0}(M)\oplus\Omega^{0,1}(M)$ and $\{dz_i\}$ is the local coframe of the $-\sqrt{-1}$ eigenspace of $J$, then
\begin{equation}\label{equ:Jact}
    \begin{aligned}
        J\alpha &=J\sum_{i}(\alpha^{i}dz_i+\alpha^{\bar i} d\bar{z_i})\\
    &=-\sum_{i}(\alpha^{i}(-\sqrt{-1})dz_i+\alpha^{\bar i}\sqrt{-1}d\bar{z_i})\\
    &=\sum_{i}(\alpha^{i}\sqrt{-1}dz_i+\alpha^{\bar i}(-\sqrt{-1})d\bar{z_i})\\
    &=\mathcal{J}\alpha.
    \end{aligned}
\end{equation}
A similar computation shows that $J\alpha=\mathcal{J}\alpha$ for $\alpha\in\Omega^{2,0}(M)\oplus\Omega^{0,2}(M)$.

\begin{lemma}[\cite{TsengY2012b}]\label{lem:lefid}
    For the symplectic operators $\partial_{\pm}$, the following identities hold:
    \begin{itemize}
        \item $\partial_{+}^2=\partial_{-}^2=0$,
        \item $dL_\omega^k=L_\omega^kd$,
        \item $*_{\omega}\frac{L_\omega^r}{r!}B_k=(-1)^{\frac{k(k+1)}{2}}\frac{L_\omega^{m-r-k}}{(m-r-k)!}B_k$,
        \item $*_g\frac{L_\omega^r}{r!}B_k=(-1)^{\frac{k(k+1)}{2}}\frac{L_\omega^{m-r-k}}{(m-r-k)!}\mathcal{J}B_k$,
    \end{itemize}
    for any $B_k\in\mathcal{P}^k(M)$ and integers $r,k$.
\end{lemma}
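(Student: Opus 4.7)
The plan is to reduce all four identities to standard facts about symplectic linear algebra and the Lefschetz decomposition. Since $L_\omega$, $*_\omega$, $*_g$, and $\mathcal{J}$ are purely algebraic operators on forms, the two Hodge-star identities are pointwise statements on the symplectic vector space $(T_p^*M, \omega_p)$; the remaining two are short differential calculations using $d\omega = 0$ and $d^2 = 0$.

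The commutation $dL_\omega^k = L_\omega^k d$ follows by induction on $k$ using the Leibniz rule together with $d\omega = 0$ (which holds because $\omega$ is almost K\"ahler). Given this, I would derive $\partial_\pm^2 = 0$ by applying $d$ to the defining decomposition $dB_k = \partial_+ B_k + L_\omega \partial_- B_k$ and expanding both $d(\partial_+ B_k)$ and $d(\partial_- B_k)$ in the same way. The result is
\[
\partial_+^2 B_k \;+\; L_\omega(\partial_+\partial_- + \partial_-\partial_+)B_k \;+\; L_\omega^2 \partial_-^2 B_k = 0,
\]
with the three summands lying in the distinct pieces $\mathcal{P}^{k+2}$, $L_\omega\mathcal{P}^k$, $L_\omega^2\mathcal{P}^{k-2}$ of the Lefschetz decomposition. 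Uniqueness of that decomposition forces each piece to vanish, yielding $\partial_+^2 = \partial_-^2 = 0$ together with the bonus anticommutation $\partial_+\partial_- + \partial_-\partial_+ = 0$.

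For the symplectic Hodge-star identity, I would localize to a point and invoke the $\mathfrak{sl}_2$-triple $(L_\omega, \Lambda_\omega, H)$ acting on $\Lambda^\bullet T_p^*M$. Each primitive $B_k$ is a lowest-weight vector and generates an irreducible submodule with basis $\{L_\omega^r B_k/r!\}_{0\le r\le m-k}$ that is preserved by $*_\omega$. Weil's formula expresses $*_\omega$ in terms of the $\mathfrak{sl}_2$-generators; on the irreducible it must interchange the powers $L_\omega^r$ and $L_\omega^{m-r-k}$, and evaluation at the highest-weight end of the module pins down the scalar as $(-1)^{k(k+1)/2}$. The $*_g$ version then follows immediately by applying $*_g = \mathcal{J}\circ *_\omega$ and observing that $\mathcal{J}$ commutes with $L_\omega$, since $\omega$ is a $(1,1)$-form and is therefore fixed by $\mathcal{J}$.

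The main obstacle will be the third identity: tracking the sign $(-1)^{k(k+1)/2}$ and the factorial coefficients through the $\mathfrak{sl}_2$-computation --- equivalently, through a direct computation in a symplectic Darboux basis --- requires careful bookkeeping, and this is exactly where the classical references of Weil and Tseng--Yau concentrate their effort.
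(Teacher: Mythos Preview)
The paper does not supply its own proof of this lemma: it is stated as a citation to Tseng--Yau \cite{TsengY2012b} (with the fourth item also relying on Weil's relation $*_g = \mathcal{J}\circ *_\omega$ quoted immediately above), and no argument is given in the paper itself. So there is no in-paper proof to compare your proposal against.

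That said, your outline is a correct sketch of the standard argument, and it is essentially the route taken in the cited sources. The commutation $dL_\omega^k = L_\omega^k d$ via $d\omega=0$, the extraction of $\partial_\pm^2=0$ from $d^2=0$ by Lefschetz uniqueness, Weil's formula for $*_\omega$ on an $\mathfrak{sl}_2$-irreducible, and the passage to $*_g$ via $\mathcal{J}\circ *_\omega$ are all the expected ingredients. One small caveat: in the $\partial_\pm^2=0$ step, you should note the edge cases (e.g.\ when $k+2>m$ so that $\mathcal{P}^{k+2}=0$, or $k<2$), but these only make the vanishing easier, not harder.
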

%%%%%%%%%%%%%%%%%%%%%%%%%%%%%%%%%%%%%%%%%%%%%%%%%%%%%%%%%%%%%%%%%%%%%%
\section{Donaldson gauge functionals} \label{sec:Donldsonfunctional}
%%%%%%%%%%%%%%%%%%%%%%%%%%%%%%%%%%%%%%%%%%%%%%%%%%%%%%%%%%%%%%%%%%%%%%

In this section we define the Hermitian Donaldson gauge functional and compute its derivative.  From now on, we set $V:=\int_M\omega^m$ and denote by $*$ the Riemannian Hodge star operator $*_g$.  We fix $\phi\in \mathcal{H}(\omega,J)$ and write $\omega_1=\omega+{\mathcal{D}_J^+}(\phi)$.

\begin{definition}\label{def:donfun}
    The \emph{Hermitian Donaldson gauge functional} is defined on $\mathcal{H}(\omega,J)$ as 
    \begin{equation*}
            \rnum{2}_\omega(\phi) := \frac{1}{(m+1)V}                            \Bigg(\int_M\phi\sum_{k=0}^m\omega_1^k\wedge\omega^{m-k}-           \int_M\phi dd^*\sigma(\phi)\wedge\omega^{m-1} \Bigg)
    \end{equation*}
    where $\omega_1=\omega+{\mathcal{D}_J^+}(\phi)$.
\end{definition}

\begin{remark}
    If $J$ is integrable, then $\sigma(\phi)=0$ and the functional reduces to
    \begin{equation*}
        \rnum{2}_\omega(\phi) = \frac{1}{(m+1)V} \int_M \phi ( \sum_{k=0}^m \omega_1^k \wedge \omega^{m-k} ). 
    \end{equation*}
    In the K\"ahler case, this functional $\rnum{2}_\omega$ was first introduced in Bando-Mabuchi \cite{Bando87}, and ---because of its relation to the moment-map picture in Donaldson’s work \cite{donaldson1999} --- it is often called the Donaldson gauge functional relative to $\omega$.
\end{remark}

\emph{Before analyzing its properties, we collect two technical lemmas.}

\begin{lemma}[\cite{TanWWZ2025}]\label{lem:3.3}
     Let $\partial_{\pm}$ be the symplectic operators associated to $\omega$. Then for all $\alpha\in\Omega_J^-$, one has
    \begin{equation*}
    d^{*}\alpha=(m-1)J\partial_{-}\alpha.
    \end{equation*}
\end{lemma}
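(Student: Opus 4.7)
The plan is to expand $d^{*}\alpha$ using the Hodge-theoretic formula $d^{*}=-\hodge d\hodge$ (valid on $2$-forms) and then repeatedly invoke \cref{lem:1} together with the primitive-form identities from \cref{lem:lefid}. First I would note that $\alpha\in\Omega_J^-$ means $\alpha=\alpha^{2,0}+\alpha^{0,2}$, so $\omega^{m-1}\wedge\alpha\in\Omega^{m+1,m-1}\oplus\Omega^{m-1,m+1}=0$; hence $\alpha$ is annihilated by $L_\omega^{m-1}$ and fits into the primitive formulas of \cref{lem:lefid}. Moreover, the same type consideration together with the computation in (\ref{equ:Jact}) shows that $\mathcal{J}\alpha=-\alpha$.

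Next I would compute $\hodge\alpha$ by the third/fourth bullet of \cref{lem:lefid} with $r=0,k=2$: since $(-1)^{k(k+1)/2}=-1$,
\[
\hodge\alpha=-\frac{\omega^{m-2}}{(m-2)!}\wedge\mathcal{J}\alpha=\frac{\omega^{m-2}}{(m-2)!}\wedge\alpha.
\]
Using $d\omega=0$ and \cref{lem:1}, together with the vanishing $\omega^{m-2}\wedge\partial_+\alpha=0$, this gives
\[
d\hodge\alpha=\frac{\omega^{m-2}}{(m-2)!}\wedge d\alpha=\frac{\omega^{m-1}}{(m-2)!}\wedge\partial_-\alpha.
\]

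Then I would apply $\hodge$ once more to the $(2m-1)$-form $\omega^{m-1}\wedge\partial_-\alpha=L_\omega^{m-1}\partial_-\alpha$. Every $1$-form is primitive, so the same identity of \cref{lem:lefid} with $r=m-1$, $k=1$ yields
\[
\hodge\!\left(\frac{L_\omega^{m-1}}{(m-1)!}\partial_-\alpha\right)=-\mathcal{J}\partial_-\alpha,
\]
and hence $\hodge d\hodge\alpha=-(m-1)\,\mathcal{J}\partial_-\alpha$. Finally, since $\partial_-\alpha\in\Omega^1_\R\subset\Omega^{1,0}\oplus\Omega^{0,1}$, (\ref{equ:Jact}) gives $\mathcal{J}\partial_-\alpha=J\partial_-\alpha$, so
\[
d^{*}\alpha=-\hodge d\hodge\alpha=(m-1)\,J\partial_-\alpha,
\]
which is the claim.

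The main technical obstacles are essentially bookkeeping: keeping the signs and factorials straight when applying the primitive Hodge identity in two different degrees ($k=2$ and $k=1$), and correctly identifying $\mathcal{J}$ with $J$ on the appropriate form types (where (\ref{equ:Jact}) supplies the latter). No hard analytic step is needed; once the primitive decomposition of $d\alpha$ and the primitivity of $\alpha$ itself are in hand, the computation is purely algebraic.
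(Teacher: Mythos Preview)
Your proof is correct and follows essentially the same route as the paper: both compute $d^{*}\alpha=-\hodge d\hodge\alpha$ by first applying the primitive Hodge identity (\cref{lem:lefid}, fourth bullet) with $r=0,k=2$ to evaluate $\hodge\alpha$, then using $dL_\omega=L_\omega d$ and the decomposition $d\alpha=\partial_+\alpha+\omega\wedge\partial_-\alpha$ from \cref{lem:1} (together with $\omega^{m-2}\wedge\partial_+\alpha=0$), and finally applying the same Hodge identity with $r=m-1,k=1$ and the identification $\mathcal{J}=J$ on $1$-forms from (\ref{equ:Jact}). The only cosmetic difference is that you make the step $\mathcal{J}\alpha=-\alpha$ explicit, whereas the paper absorbs it directly into the sign bookkeeping.
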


\begin{proof}
    First, we may decompose $\alpha\in\Omega^-_J$ as $\alpha=\alpha^{2,0}+\alpha^{0,2}$, and we verify that
    \begin{equation*}
        \omega^{m-1} \wedge \alpha = 0,
    \end{equation*}
    which implies that $\alpha$ is primitive, i.e.\ $\alpha\in \mathcal{P}^2(M)$.
    
    Using repeatedly the identities in \cref{lem:lefid}, we compute
    \begin{align*}
        d^{*}\alpha=d^{*}L_\omega^0\alpha&={(-1)^3{*}d((-1)^{3}\frac{L_\omega^{m-2}}{(m-2)!}\mathcal{J}\alpha)}\\
        &=-{*}d(\frac{L_\omega^{m-2}}{(m-2)!}\alpha)\\
        &=-{*}\frac{L_\omega^{m-2}}{(m-2)!}d\alpha\\
        &=-{*}\left(\frac{L_\omega^{m-2}}{(m-2)!}\partial_{+}\alpha+\frac{L_\omega^{m-2}}{(m-2)!}\omega\wedge\partial_{-}\alpha \right)\\
        &=-{*}\frac{L_\omega^{m-1}}{(m-2)!}\partial_{-}\alpha\\
        &=(m-1)\frac{L_\omega^{m-m+1-1}}{(m-m+1-1)!}\mathcal{J}\partial_{-}\alpha\\
        &=(m-1)J\partial_{-}\alpha.
    \end{align*}
\end{proof}

Let $\alpha=\alpha^{2,0}+\alpha^{0,2} \in \Omega_J^{-}(M)$, which is a primitive form.  Applying the $(p,q)$-decomposition, we can write
\begin{equation*}
d \alpha  =\partial_{J} \alpha^{2,0}+\bar{\partial}_{J} \alpha^{0,2}
+\partial_{J} \alpha^{0,2}+\bar{\partial}_{J} \alpha^{2,0}+A_{J} \alpha^{2,0}+\bar{A}_{J} \alpha^{0,2}.
\end{equation*}
Denote $ \beta_{0}(\alpha) $ by the (1,2)+(2,1)-part of $\partial_+ \alpha$ in \cref{lem:1}. Then 
\begin{equation}
\begin{aligned}
d \alpha & =\partial_{J} \alpha^{2,0}+\bar{\partial}_{J} \alpha^{0,2}+\beta_{0}(\alpha)+\beta_{1}(\alpha) \\
& =\partial_{+} \alpha+\omega \wedge \partial_{-} \alpha,
\end{aligned}
\end{equation}
where $ \beta_{1}(\alpha)=\omega \wedge \partial_{-} \alpha $. In particular, there are no 3-primitive forms on closed 4-manifolds; hence $d\alpha = \omega \wedge \partial_- \alpha$.

For simplicity, we write $\beta_{0}(\phi)$ for $\beta_{0}(\sigma(\phi))$. Then we have the following lemma.
\begin{lemma}\label{lem:5}
    \begin{equation*}
    \begin{aligned}
        -(m-1)^2 \int_MJ \partial_- \sigma(f)\wedge \partial_- \sigma(\phi)\wedge\omega^{m-1-k}\wedge\omega_1^k = (A)_{f,\phi} + (B)_{f,\phi} = (A)_{\phi,f} + (B)_{\phi,f}
    \end{aligned}
    \end{equation*}
    where
    \begin{equation*}
    \begin{aligned}
        (A)_{u,v} &= \int_M u ( dd^*\sigma(v) \wedge \omega + (m-1)dJ\beta_0(v)) \wedge\omega^{m-2-k}\wedge\omega_1^k \\
        (B)_{u,v} &= (m-1)^2\int_M J \partial_- \sigma(u) \wedge \beta_0(v) \wedge\omega^{m-2-k}\wedge\omega_1^k.
    \end{aligned}
    \end{equation*}
    Moreover, we have
    \begin{equation*}
    \begin{aligned}
        -(m-1)^2 \int_MJ \partial_- \sigma(\phi)\wedge \partial_- \sigma(\phi)\wedge\omega^{m-1-k}\wedge\omega_1^k \geq 0
    \end{aligned}
    \end{equation*}
    for all integer $k\ge0$ such that $m-1-k\ge1$. In particular, if $J$ is integrable, then $\sigma(\phi)=0$ and so $\beta_0(\phi)=0$. Therefore, the above identities are trivial when $J$ is integrable.
\end{lemma}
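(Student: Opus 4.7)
Denote the left-hand side by $I_{f,\phi}$. The plan is to show $I_{f,\phi}=(A)_{f,\phi}+(B)_{f,\phi}$ by a sequence of Lefschetz decompositions (\cref{lem:1}) and integrations by parts, and then to deduce the second identity and the positivity corollary from symmetry. Two observations drive the bookkeeping: $\omega^{m-2-k}\wedge\omega_1^k$ is of pure type $(m-2,m-2)$ since $\omega_1=\omega+\mathcal{D}_J^+\phi$ is a real $(1,1)$-form; and the defining equation $d_J^-d^*\sigma(f)=-d_J^-Jdf$ controls only the $\Omega_J^-$-component of $dd^*\sigma(f)$, but this is the only component that will actually enter.

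First I would apply \cref{lem:1} to $\sigma(\phi)$, writing $d\sigma(\phi)=\partial_J\sigma(\phi)^{2,0}+\bar\partial_J\sigma(\phi)^{0,2}+\beta_0(\phi)+\omega\wedge\partial_-\sigma(\phi)$. A type count shows that wedging the $(3,0)+(0,3)$-piece with any 1-form and with $\omega^{m-2-k}\wedge\omega_1^k$ produces forms in $\Omega^{p,q}$ with $\max\{p,q\}>m$, hence zero. Substituting $\omega\wedge\partial_-\sigma(\phi)\equiv d\sigma(\phi)-\beta_0(\phi)$ (modulo terms that vanish by type) into $I_{f,\phi}$ yields
\[
I_{f,\phi}=-(m-1)^2\int_M J\partial_-\sigma(f)\wedge d\sigma(\phi)\wedge\omega^{m-2-k}\wedge\omega_1^k+(B)_{f,\phi},
\]
so it remains to show the first integral equals $(A)_{f,\phi}$.

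For this, integrate by parts (using $d\omega=d\omega_1=0$) to pass $d$ onto $J\partial_-\sigma(f)=\tfrac{1}{m-1}d^*\sigma(f)$ (\cref{lem:3.3}), obtaining $-(m-1)\int dd^*\sigma(f)\wedge\sigma(\phi)\wedge\omega^{m-2-k}\wedge\omega_1^k$. Because $\sigma(\phi)\in\Omega_J^-$ has pure type $(2,0)+(0,2)$, only the $\Omega_J^-$-component of $dd^*\sigma(f)$ contributes against $\omega^{m-2-k}\wedge\omega_1^k$; by the defining relation this component equals $-P_J^-(dJdf)$, so $dd^*\sigma(f)$ may be replaced by $-dJdf$ inside the integrand. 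A second integration by parts then puts $d$ back onto $\sigma(\phi)$, giving $(m-1)\int Jdf\wedge d\sigma(\phi)\wedge\omega^{m-2-k}\wedge\omega_1^k$. Applying \cref{lem:1} once more (and discarding the $(3,0)+(0,3)$-part), the elementary identity
\[
\int_M J\alpha\wedge\eta\wedge\omega^{\bullet}\wedge\omega_1^k=-\int_M\alpha\wedge J\eta\wedge\omega^{\bullet}\wedge\omega_1^k
\]
(valid for a real 1-form $\alpha$ and $\eta$ of type $(1,0)+(0,1)$ or $(1,2)+(2,1)$, since the antisymmetric combination sits in $\Omega^{p,q}$ with $\max\{p,q\}>m$ once the full wedge is taken), together with one last use of \cref{lem:3.3} on $d^*\sigma(\phi)$, transforms the expression into $-\int df\wedge d^*\sigma(\phi)\wedge\omega^{m-1-k}\wedge\omega_1^k-(m-1)\int df\wedge J\beta_0(\phi)\wedge\omega^{m-2-k}\wedge\omega_1^k$. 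This is exactly $(A)_{f,\phi}$, as one sees by integrating $f$ by parts in the definition of $(A)_{f,\phi}$.

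The equality $(A)_{\phi,f}+(B)_{\phi,f}=(A)_{f,\phi}+(B)_{f,\phi}$ then follows from $I_{f,\phi}=I_{\phi,f}$: the antisymmetric combination $J\partial_-\sigma(f)\wedge\partial_-\sigma(\phi)-J\partial_-\sigma(\phi)\wedge\partial_-\sigma(f)$ is of type $(2,0)+(0,2)$ and hence vanishes against $\omega^{m-1-k}\wedge\omega_1^k\in\Omega^{m-1,m-1}$. For the nonnegativity when $f=\phi$ and $m-1-k\geq 1$, I would write $-J\alpha\wedge\alpha=2\sqrt{-1}\,\alpha^{1,0}\wedge\overline{\alpha^{1,0}}$ (a nonneg real $(1,1)$-form) for $\alpha=\partial_-\sigma(\phi)$, then wedge with the positive $(m-1,m-1)$-form $\omega^{m-1-k}\wedge\omega_1^k$ (the hypothesis $m-1-k\geq 1$ guaranteeing at least one $\omega$-factor is present) to obtain a nonneg $(m,m)$-form. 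The main obstacle throughout is bookkeeping: tracking signs through the two integrations by parts, the two uses of \cref{lem:3.3}, and the two Lefschetz decompositions, while keeping careful track --- via $(p,q)$-type --- of which components of $dd^*\sigma(f)$ and $d\sigma(\phi)$ actually contribute.
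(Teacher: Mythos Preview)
Your proposal is correct and follows essentially the same route as the paper: both arguments cycle through \cref{lem:3.3}, the defining relation $d_J^-d^*\sigma(f)=-d_J^-Jdf$, the Lefschetz decomposition of $d\sigma(\phi)$ from \cref{lem:1}, and two integrations by parts, the only difference being that the paper starts from $(A)_{f,\phi}$ and works toward $I_{f,\phi}-(B)_{f,\phi}$ whereas you start from $I_{f,\phi}$ and peel off $(B)_{f,\phi}$ first. Your treatment of the symmetry (via $I_{f,\phi}=I_{\phi,f}$ from type considerations) and of the positivity (via the nonneg $(1,1)$-form $\alpha\wedge J\alpha$ wedged with the positive $(m-1,m-1)$-form $\omega^{m-1-k}\wedge\omega_1^k$) are coordinate-free versions of what the paper does explicitly in a $J$-adapted basis; note, though, that the parenthetical about ``at least one $\omega$-factor'' is a red herring, since $\omega^{m-1-k}\wedge\omega_1^k$ is already positive for all $0\le k\le m-1$ by virtue of both $\omega$ and $\omega_1$ being positive.
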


\begin{proof}
We now prove the identity
\begin{equation} \label{eq:lem5_1}
    -(m-1)^2 \int_MJ \partial_- \sigma(f)\wedge \partial_- \sigma(\phi)\wedge\omega^{m-1-k}\wedge\omega_1^k = (A)_{f,\phi} + (B)_{f,\phi}
\end{equation}
The corresponding identity with $f$ and $\phi$ swapped follows by the same argument.  Applying \cref{lem:3.3}, we find
\begin{equation*}
\begin{aligned}
    (A)_{f,\phi}     =& {-}(m-1)\int_Mdf \wedge (J\partial_-\sigma(\phi)\wedge \omega + J\beta_0(\phi))\wedge\omega^{m-2-k}\wedge\omega_1^k
    \\
    =& {-}(m-1) \int_{M}\left\{d f \wedge (J\partial_-\sigma(\phi) \wedge \omega+J \beta_{0}(\phi))\right\}^{(2,2)} \wedge \omega^{m-2-k} \wedge \omega_{1}^k
    \\
    =&(m-1) \int_{M} Jd f \wedge (\partial_-\sigma(\phi) \wedge \omega+ \beta_{0}(\phi)) \wedge \omega^{m-2-k} \wedge \omega_{1}^k.
\end{aligned}
\end{equation*}
where in the second line we restrict to the $(2,2)$-part, since only $J$-invariant $(2,2)$-forms contribute.

On the other hand, since
\begin{equation*}
    \partial_{+} \alpha= \partial_{J} \alpha^{2,0}+\bar{\partial}_{J} \alpha^{0,2}+\beta_{0}(\alpha),
\end{equation*}
we have
\begin{equation*}
\begin{aligned}
    (A)_{f,\phi} =&(m-1)\int_M Jdf \wedge d\sigma(\phi) \wedge\omega^{m-2-k}\wedge\omega_1^k
    \\
    =&(m-1)\int_MdJdf\wedge\sigma(\phi)\wedge\omega^{m-2-k}\wedge\omega_1^k
    \\
    =&{-}(m-1)\int_Md_J^-d^*\sigma(f)\wedge\sigma(\phi)\wedge\omega^{m-2-k}\wedge\omega_1^k \quad \text{(since $d_J^-Jdf=-d^-_Jd^*\sigma(f)$)}
    \\
    =& {-}(m-1)\int_M d^*\sigma(f)\wedge d\sigma(\phi)\wedge\omega^{m-2-k}\wedge\omega_1^k
    \\
    =& {-}(m-1)^2\int_M J \partial_- \sigma(f) \wedge \beta_0(\phi) \wedge\omega^{m-2-k}\wedge\omega_1^k 
    \\
    &{}- (m-1)^2 \int_MJ \partial_- \sigma(f)\wedge \partial_- \sigma(\phi)\wedge\omega^{m-1-k}\wedge\omega_1^k.
\end{aligned}
\end{equation*}

To derive the second part, set $ f=\phi $. Then \cref{eq:lem5_1} becomes
\begin{equation*}
(A)_{\phi,\phi} + (B)_{\phi,\phi}  = {-}(m-1)^2 \int_MJ \partial_- \sigma(\phi)\wedge \partial_- \sigma(\phi)\wedge\omega^{m-1-k}\wedge\omega_1^k.
\end{equation*}
It remains to show the integral
\begin{equation*}
    (m-1)^2 \int_M \partial_- \sigma(\phi)\wedge J\partial_- \sigma(\phi)\wedge\omega^{m-1-k}\wedge\omega_1^k
\end{equation*}
is nonnegative. By Lemma~4.2.1 in \cite{Gauduchon2010}, at each $x \in M$ one may choose a $J$-adapted basis $\{e_1,Je_1,\ldots,e_m,Je_m\}$ of $T_xM$ with dual basis $\{e_1^*,Je_1^*,\ldots,e_m^*,Je_m^*\}$ so that $\omega(x)=\sum_je_j^*\wedge Je_j^*$ and $\omega_1=\sum_j\lambda e_j^*\wedge Je_j^*$ with $\lambda_j>0$. Moreover, there exists a positive $J$-(1,1) form $\omega^{(k)}$ and $\omega^{(k)}=\sum_j\mu_j^{(k)}e_j^*\wedge Je_j^*$ with $\mu_j^{(k)}>0$ and $(\omega^{(k)})^{m-1}=\omega^{m-1-k}\wedge\omega_1^k$. 

Writing $\partial_-\sigma(\phi)|_x = \sum_j(a_je^*_j+b_jJe^*_j)$ gives
\begin{eqnarray*}
 \lefteqn{\partial_{-}\sigma(\phi)\wedge J\partial_{-}\sigma(\phi)\wedge\omega^{m-1-k}\wedge\omega_1^k|_x} \\
 & & = \sum_j(a_je^*_j+b_jJe^*_j)\wedge\sum_j(a_jJe^*_j-b_je^*_j)\wedge(\omega^{(k)})^{m-1}|_x \\
 & & = \Big(\sum_j(a_j^2+b_j^2 )\wedge e^*_j\wedge Je^*_j \Big)\wedge(\omega^{(k)})^{m-1}|_x \geq 0.
 \end{eqnarray*}
Therefore, $(A)_{\phi,\phi} + (B)_{\phi,\phi} \geq 0$, which completes the proof.
\end{proof}

\begin{definition} \label{def:twistdonfun}
    The \emph{twisted Donaldson gauge functional} is defined on $\mathcal{H}(\omega,J)$ by
    \begin{equation*}
        \begin{aligned}
            \rnum{2}'_\omega(\phi):={} &\frac{1}{V}\int_M\phi\omega^m+\frac{1}{V}\int_M\phi dJd\phi\wedge (\sum_{k=0}^{m-1}\frac{m-k}{m+1}\omega_1^{k}\wedge\omega^{m-k-1} )
            \\
            &{} +\frac{(m-1)^2}{V}\int_M J \partial_- \sigma(\phi) \wedge \partial_- \sigma(\phi) \wedge (\sum_{k=0}^{m-2}\frac{m-k}{m+1}\omega_1^{k}\wedge\omega^{m-k-2} )
        \end{aligned}
    \end{equation*}
    where $\omega_1 = \omega + \mathcal{D}_J^+(\phi)$.
\end{definition}

\begin{remark}
    When $\dim M=4$, there are no primitive 3-forms, so $\beta_0(\phi)=0$. In this case,
    \begin{equation*}
            \rnum{2}'_\omega(\phi) ={} \frac{1}{V}\int_M\phi\omega^m + \frac{1}{3V}\int_M\phi dJd\phi\wedge (\omega_1 + 2\omega)-\frac{2}{3V}\int_M\phi dd^*\sigma(\phi).
    \end{equation*}
\end{remark}
    
We compute the derivative $\tau'$ of $\rnum{2}_\omega'$ at a potential $\phi \in \mathcal{H}(\omega,J)$.  More precisely,
\begin{align*}
    \tau'_{\phi}(f) ={}& d\rnum{2}_\omega'|_{\phi}(f) = \left. \frac{d}{dt} \right|_{t=0} \rnum{2}_\omega'(\phi+tf) 
    \\
    ={}& \frac{1}{V} \int_M f\omega^m +  \frac{1}{V} \int_M (f dJd\phi + \phi dJdf) \wedge (\sum_{k=0}^{m-1}\frac{m-k}{m+1}\omega_1^{k}\wedge\omega^{m-k-1} )
    \\
    &{} + \frac{1}{V} \int_M \phi dJd\phi \wedge (\sum_{k=0}^{m-1}\frac{(k+1)(m-k+1)}{m+1}\omega_1^{k}\wedge\omega^{m-k-2} \wedge \mathcal{D}_J^+(f) )
    \\
    &{} + \frac{2(m-1)^2}{V}  \int_M J \partial_- \sigma(f) \wedge \partial_- \sigma(\phi) \wedge (\sum_{k=0}^{m-2}\frac{m-k}{m+1}\omega_1^{k}\wedge\omega^{m-k-2} )
    \\
    &{} +\frac{(m-1)^2}{V}\int_M J \partial_- \sigma(\phi) \wedge \partial_- \sigma(\phi) \wedge (\sum_{k=0}^{m-3}\frac{(k+1)(m-k-1)}{m+1}\omega_1^{k}\wedge\omega^{m-k-3} \wedge \mathcal{D}_J^+(f))
\end{align*}

After simplification and using $\omega_1 - \omega = \mathcal{D}_J^+(\phi)$, we obtain
\begin{align*}
    \tau'_{\phi}(f)  ={}& \frac{1}{V} \int_M f\omega_1^m +  \frac{2}{V} \int_M f dJd\phi \wedge (\sum_{k=0}^{m-1}\frac{m-k}{m+1}\omega_1^{k}\wedge\omega^{m-k-1} )
    \\
    &{} + \frac{1}{V} \int_M \phi dJd\phi \wedge (\sum_{k=0}^{m-2}\frac{(k+1)(m-k-1)}{m+1}\omega_1^{k}\wedge\omega^{m-k-2} \wedge \mathcal{D}_J^+(f) )
    \\
    &{} + \frac{2(m-1)^2}{V}  \int_M J \partial_- \sigma(f) \wedge \partial_- \sigma(\phi) \wedge (\sum_{k=0}^{m-2}\frac{m-k}{m+1}\omega_1^{k}\wedge\omega^{m-k-2} )
    \\
    &{} +\frac{(m-1)^2}{V}\int_M J \partial_- \sigma(\phi) \wedge \partial_- \sigma(\phi) \wedge (\sum_{k=0}^{m-3}\frac{(k+1)(m-k-1)}{m+1}\omega_1^{k}\wedge\omega^{m-k-3} \wedge \mathcal{D}_J^+(f) )
    \\
    &{} -\frac{1}{V}\int_M f \mathcal{D}_J^+(\phi) \wedge (\sum_{k=0}^{m-1}\omega_1^{k}\wedge\omega^{m-k-1} ).
\end{align*}
By definition of the $\mathcal{D}_J^+$ operator, the above computation yields the following proposition.
\begin{proposition}[] \label{prop:4}
    The derivative $\tau'$ of $\rnum{2}_\omega'$ at $\phi$ can be written as
    \begin{equation*}
    \begin{aligned}
    \tau'_{\phi}(f)  ={}& \frac{1}{V} \int_M f\omega_1^m +  \frac{1}{V} \int_M f dJd\phi \wedge (\sum_{k=0}^{m-1}\frac{m-2k-1}{m+1}\omega_1^{k}\wedge\omega^{m-k-1} )
    \\
    &{} - \frac{1}{V} \int_M f dd^*\sigma(\phi) \wedge (\sum_{k=0}^{m-1}\omega_1^{k}\wedge\omega^{m-k-1} )
    \\
    &{} + \frac{1}{V} \int_M \phi dJd\phi \wedge (\sum_{k=0}^{m-2}\frac{(k+1)(m-k-1)}{m+1}\omega_1^{k}\wedge\omega^{m-k-2} \wedge \mathcal{D}_J^+(f) )
    \\
    &{} + \frac{2(m-1)^2}{V}  \int_M J \partial_- \sigma(f) \wedge \partial_- \sigma(\phi) \wedge (\sum_{k=0}^{m-2}\frac{m-k}{m+1}\omega_1^{k}\wedge\omega^{m-k-2} )
    \\
    &{} +\frac{(m-1)^2}{V}\int_M J \partial_- \sigma(\phi) \wedge \partial_- \sigma(\phi) \wedge (\sum_{k=0}^{m-3}\frac{(k+1)(m-k-1)}{m+1}\omega_1^{k}\wedge\omega^{m-k-3} \wedge \mathcal{D}_J^+(f) ).
    \end{aligned}
    \end{equation*}
    In particular, if $J$ is integrable, then $\sigma(\phi)= 0$ and hence
    \begin{equation*}
    \begin{aligned}
        \tau'_{\phi}(f) = & \frac{1}{V} \int_M f\omega_1^m.
    \end{aligned}
    \end{equation*}
\end{proposition}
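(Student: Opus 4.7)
The plan is to compute the Gateaux derivative $\tau'_\phi(f) = \frac{d}{dt}\big|_{t=0}\rnum{2}'_\omega(\phi+tf)$ directly from Definition~\ref{def:twistdonfun} and then rearrange the result into the stated form via integration by parts and a telescoping identity. Differentiating term by term: the linear piece contributes $\frac{1}{V}\int_M f\omega^m$; the second summand (quadratic in $\phi$ with weights $\omega_1^k\wedge\omega^{m-k-1}$) splits by the product rule into three contributions, from $\phi$, from $dJd\phi$, and from $\omega_1^k$, the last producing a factor $k\omega_1^{k-1}\wedge\mathcal{D}_J^+(f)$; the third summand (symmetric in the two $\partial_-\sigma(\phi)$ factors, with weight $\omega_1^k\wedge\omega^{m-k-2}$) splits analogously. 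This reproduces exactly the intermediate formula displayed in the text just before the proposition.

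To symmetrize, I would use that $\omega$, $\omega_1$, and $\mathcal{D}_J^+(\phi)$ are all $d$-closed, so that the weight $\omega_1^k\wedge\omega^{m-k-1}$ is closed; then $\int_M\phi\,dJdf\wedge(\text{weight})=\int_M f\,dJd\phi\wedge(\text{weight})$ by integration by parts, collapsing the $\phi\cdot dJdf$ and $f\cdot dJd\phi$ contributions into one doubled term. The analogous symmetrization of the quadratic-$\sigma$ contribution is exactly what Lemma~\ref{lem:5} supplies, in its symmetric form $(A)_{f,\phi}+(B)_{f,\phi}=(A)_{\phi,f}+(B)_{\phi,f}$, furnishing the cross term $\frac{2(m-1)^2}{V}\int_M J\partial_-\sigma(f)\wedge\partial_-\sigma(\phi)\wedge\cdots$ with its correct coefficient.

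Finally, I would telescope using $\omega_1^m-\omega^m = \mathcal{D}_J^+(\phi)\wedge\sum_{k=0}^{m-1}\omega_1^k\wedge\omega^{m-k-1}$ to combine the $\frac{1}{V}\int_M f\omega^m$ piece with the $\mathcal{D}_J^+(\phi)$-laden correction appearing in the intermediate formula, producing the clean $\frac{1}{V}\int_M f\omega_1^m$. Splitting $\mathcal{D}_J^+(\phi) = dJd\phi + dd^*\sigma(\phi)$, the $dJd\phi$ coefficient reassembles as $\frac{2(m-k)}{m+1}-1 = \frac{m-2k-1}{m+1}$, while the $dd^*\sigma(\phi)$ part yields the third line of the proposition. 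The main obstacle is bookkeeping the binomial-type coefficients $\frac{m-k}{m+1}$ and $\frac{(k+1)(m-k\pm1)}{m+1}$ through the reindexings forced by the $\omega_1 = \omega+\mathcal{D}_J^+(\phi)$ substitution, and invoking Lemma~\ref{lem:5} in the right direction to produce the asymmetric cross term $J\partial_-\sigma(f)\wedge\partial_-\sigma(\phi)$ rather than the symmetric one. When $J$ is integrable, every $\sigma$-bearing term vanishes and $\sum_{k=0}^{m-1}\frac{m-2k-1}{m+1}=0$ kills the $dJd\phi$ line, leaving only $\frac{1}{V}\int_M f\omega_1^m$ as claimed.
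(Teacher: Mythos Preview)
Your derivation of the general formula tracks the paper's computation closely: differentiate Definition~\ref{def:twistdonfun} term by term, symmetrize $\int_M\phi\,dJdf\wedge W=\int_M f\,dJd\phi\wedge W$ via integration by parts (using closedness of the weight $W$), telescope $\omega^m$ into $\omega_1^m-\mathcal{D}_J^+(\phi)\wedge\sum_k\omega_1^k\wedge\omega^{m-k-1}$, and split $\mathcal{D}_J^+(\phi)=dJd\phi+dd^*\sigma(\phi)$ to reassemble the coefficient $\frac{2(m-k)}{m+1}-1=\frac{m-2k-1}{m+1}$. One quibble: you invoke Lemma~\ref{lem:5} to symmetrize the $\sigma$-cross term, but Lemma~\ref{lem:5} is stated for weights $\omega^{m-1-k}\wedge\omega_1^k$ (total degree $m-1$ in $\omega,\omega_1$), whereas the third summand of $\rnum{2}'_\omega$ carries weights $\omega^{m-k-2}\wedge\omega_1^k$ (total degree $m-2$). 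The symmetry you actually need is the elementary pointwise fact that the $(1,1)$-component of $J\alpha\wedge\beta$ is symmetric in real $1$-forms $\alpha,\beta$; the paper simply writes the factor $2$ at the differentiation step without citing any lemma.

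The genuine gap is in your integrable-case argument. You claim that $\sum_{k=0}^{m-1}\frac{m-2k-1}{m+1}=0$ ``kills the $dJd\phi$ line,'' but the vanishing of the coefficient sum does not make the integral vanish, since each summand carries a different weight $\omega_1^k\wedge\omega^{m-k-1}$. More seriously, you overlook the fourth line of the proposition,
\[
\frac{1}{V}\int_M\phi\,dJd\phi\wedge\Bigl(\sum_{k=0}^{m-2}\tfrac{(k+1)(m-k-1)}{m+1}\omega_1^k\wedge\omega^{m-k-2}\wedge\mathcal{D}_J^+(f)\Bigr),
\]
which involves no $\sigma$ and therefore does \emph{not} vanish when $J$ is integrable (there $\mathcal{D}_J^+(f)=dJdf$). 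What actually happens is that the second and fourth lines cancel: after symmetrizing the fourth line into $\frac{1}{V}\int_M f\,(dJd\phi)^2\wedge(\cdots)$ and substituting $dJd\phi=\omega_1-\omega$, a direct expansion of the $\omega_1^j\wedge\omega^{m-j}$ coefficients shows the two lines are negatives of one another. Alternatively, one may simply observe that in the integrable case $\rnum{2}'_\omega$ reduces to the classical Aubin--Yau functional, for which the variational formula $\tau'_\phi(f)=\frac{1}{V}\int_M f\,\omega_1^m$ is standard.
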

%%%%%%%%%%%%%%%%%%%%%%%%%%%%%%%%%%%%%%%%%%%%%%%%%%%%%%%%%%%%%%%%%%%%%%
\section{Hermitian Aubin Functionals}\label{sec:Aubinfunctional}
%%%%%%%%%%%%%%%%%%%%%%%%%%%%%%%%%%%%%%%%%%%%%%%%%%%%%%%%%%%%%%%%%%%%%%

In this section we study Aubin functionals on almost K\"ahler manifolds and prove \cref{thm:3.1}.

Motivated by \cref{lem:5} and in analogy with \cite[Proposition~4.2.1]{Gauduchon2010}, we define the following twisted Aubin functionals.
\begin{definition} \label{defn:twistedAubin}
   The \emph{twisted Aubin functionals} are defined as 
    \begin{align*}
        I'_\omega(\phi):={}&{-\frac{1}{V}}\int_M\phi dJd\phi\wedge (\sum_{k=0}^{m-1}\omega^{m-k-1}\wedge\omega_1^k)
        \\
        &{} -\frac{(m-1)^2}{V}\int_M J \partial_- \sigma(\phi) \wedge \partial_- \sigma(\phi) \wedge (\sum_{k=0}^{m-2} \omega_1^{k}\wedge\omega^{m-k-2} ),
        \\
        J'_\omega(\phi):={}&{-\frac{1}{V}}\int_M\phi dJd\phi\wedge(\sum_{k=0}^{m-1}\frac{m-k}{m+1}\omega^{m-k-1}\wedge\omega_1^k)\\
        & -\frac{(m-1)^2}{V}\int_M J \partial_- \sigma(\phi) \wedge \partial_- \sigma(\phi) \wedge (\sum_{k=0}^{m-2}\frac{m-k}{m+1}\omega_1^{k}\wedge\omega^{m-k-2} )
    \end{align*}
    where $\phi\in \mathcal{H}(\omega,J)$ and $\omega_1=\omega+{\mathcal{D}_J^+}(\phi)$.
\end{definition}

\begin{remark}
    \begin{enumerate}[(i)]
        \item If $J$ is integrable, then $\sigma(\phi)=0$, and these twisted functionals reduce to the Aubin functionals in K\"ahler geometry \cite{Aubin84}.
        \item In real dimension 4, one obtains
        \begin{align*}
        I'_\omega(\phi) ={}&{-\frac{1}{V}}\int_M\phi dJd\phi\wedge (\omega+ \omega_1) +\frac{1}{V}\int_M\phi dd^*\sigma(\phi),
        \\
        J'_\omega(\phi) ={}&{-}\frac{1}{3V}\int_M\phi dJd\phi\wedge (\omega_1 + 2\omega) +\frac{2}{3V}\int_M\phi dd^*\sigma(\phi).
        \end{align*}
    \end{enumerate}
\end{remark}

By \cref{lem:5,defn:twistedAubin}, as in K\"ahler geometry, we can establish an inequality between $I'_\omega$ and $J'_\omega$ (cf.\ \cite[Proposition~4.2.1]{Gauduchon2010}).  To prove \cref{thm:3.1}, we need the following lemma.
\begin{lemma}\label{lem:2}
    For each integer $k=0,1,\ldots,m-1$,
    \begin{equation*}
        \int_M\phi dJd\phi\wedge\omega^{m-1-k}\wedge\omega_1^{k}=-\int_Md\phi\wedge Jd\phi\wedge\omega^{m-1-k}\wedge\omega_1^{k}\le0,
    \end{equation*}
\end{lemma}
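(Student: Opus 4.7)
The plan is to treat the two parts---the equality and the sign inequality---separately.

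For the equality, the strategy is integration by parts on the closed manifold $M$. Since $\omega$ is closed and $\omega_1 = \omega + \mathcal{D}_J^+(\phi)$ is also closed---because $\mathcal{D}_J^+(\phi) = d\bigl(Jd\phi + d^*\sigma(\phi)\bigr)$ is $d$-exact---the wedge $\omega^{m-1-k}\wedge\omega_1^{k}$ is a closed $(2m-2)$-form. Applying Stokes' theorem to $d\bigl(\phi\cdot Jd\phi \wedge \omega^{m-1-k}\wedge\omega_1^{k}\bigr)$ and using the Leibniz rule together with closedness gives
\[
0 \;=\; \int_M d\phi\wedge Jd\phi \wedge \omega^{m-1-k}\wedge\omega_1^{k} \;+\; \int_M \phi\, dJd\phi \wedge \omega^{m-1-k}\wedge\omega_1^{k},
\]
which is exactly the asserted identity.

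For the inequality, the plan is to reduce to a pointwise nonnegativity statement and then invoke the $J$-adapted basis argument already used in the proof of \cref{lem:5}. Specifically, I would show that at each $x\in M$,
\[
d\phi\wedge Jd\phi\wedge \omega^{m-1-k}\wedge\omega_1^{k}\big|_x \;\geq\; 0
\]
as a multiple of the volume form. Pick an orthonormal $J$-adapted coframe $\{e_j^*, Je_j^*\}$ at $x$ in which $\omega$ and $\omega_1$ diagonalize simultaneously, so that $\omega = \sum_j e_j^*\wedge Je_j^*$ and $\omega_1 = \sum_j \lambda_j e_j^*\wedge Je_j^*$ with $\lambda_j > 0$; hence $\omega^{m-1-k}\wedge\omega_1^{k}$ is a positive linear combination of the ``diagonal'' $(2m-2)$-forms $\prod_{l\ne j}(e_l^*\wedge Je_l^*)$. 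Decomposing $d\phi|_x = \sum_j(a_j e_j^* + b_j Je_j^*)$ and using $J\alpha(\cdot) = -\alpha(J\cdot)$ to compute $Jd\phi|_x = \sum_j(a_j Je_j^* - b_j e_j^*)$, the diagonal contribution to $d\phi\wedge Jd\phi|_x$ is $\sum_j(a_j^2 + b_j^2)\, e_j^*\wedge Je_j^*$; the cross terms $e_i^*\wedge Je_j^*$ with $i\neq j$ contribute nothing after wedging with $\omega^{m-1-k}\wedge\omega_1^{k}$ because the latter is supported on products of complementary diagonal pairs. What survives is a nonnegative multiple of the volume form, so integrating gives the inequality.

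The calculation is essentially mechanical: the only nontrivial input is the simultaneous diagonalization of $\omega$ and $\omega_1$ at a point, which is Lemma 4.2.1 of Gauduchon and has already been invoked inside \cref{lem:5}. So the main obstacle is really just careful bookkeeping with the $J$-action on the coframe; the integration-by-parts step requires nothing beyond the closedness of $\omega_1$.
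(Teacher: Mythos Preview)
Your proposal is correct and follows essentially the same route as the paper: the identity is obtained by Stokes' theorem using the closedness of $\omega$ and $\omega_1$, and the sign is established by the same pointwise $J$-adapted coframe computation that appears in the proof of \cref{lem:5}. The paper's own proof is a two-line sketch that simply points to Stokes and to \cref{lem:5}; you have accurately unpacked both steps.
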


\begin{proof}
    The identity follows directly from the Stokes formula. To show that the integral is negative, the argument is similar to that of Lemma \ref{lem:5}.
\end{proof}

\begin{proof}[Proof of \cref{thm:3.1}]
    A straightforward calculation gives
    \begin{equation*}
        \begin{aligned}
             \frac{m}{m+1}I'_\omega(\phi) ={}& J'_\omega(\phi)-\frac{1}{(m+1)V} \sum_{k=0}^{m-1} \int_M k\phi dJd\phi\wedge \omega^{m-k-1}\wedge\omega_1^k
        \\
        &{}- \frac{(m-1)^2}{(m+1)V} \sum_{k=0}^{m-2} k \int_M J\partial_-\sigma(\phi) \wedge \partial_-\sigma(\phi) \wedge \omega^{m-k-1}\wedge\omega_1^k
        \end{aligned}
    \end{equation*}
    and also
    \begin{equation*}
    \begin{aligned}
        (m+1)J'_\omega(\phi) ={}& I'_\omega(\phi)-\frac{1}{V} \sum_{k=0}^{m-2} \int_M (m-k-1) \phi dJd\phi\wedge \omega^{m-k-1}\wedge\omega_1^k
        \\
        &{}-\frac{(m-1)^2}{V} \sum_{k=0}^{m-2}\int_M (m-k-1) J\partial_-\sigma(\phi) \wedge \partial_-\sigma(\phi) \wedge \omega^{m-k-2}\wedge\omega_1^k.
    \end{aligned}
    \end{equation*}
    Using \cref{lem:5}, \cref{lem:2},together with \cref{defn:twistedAubin}, one immediately obtains the inequality.
\end{proof}

As in K\"ahler geometry (cf. \cite[Proposition~4.2.2]{Gauduchon2010}), we obtain the following relations among the twisted Donaldson gauge functional and the twisted Aubin functionals.
\begin{proposition}
    For any $\phi\in \mathcal{H}(\omega,J)$, the following identities hold:
    \begin{enumerate}[(i)]
        \item 
        \begin{equation*}
        J_\omega'(\phi)+\rnum{2}_\omega'(\phi) = \frac{1}{V}\int_M\phi\omega^m,
        \end{equation*}
        \item 
        \begin{equation*}
        \begin{aligned}
        I_\omega'(\phi)-J_\omega'(\phi)-\rnum{2}_\omega'(\phi) ={}& {-} \frac{1}{V}\int_M\phi\omega_1^m +
        \frac{1}{V}\int_M\phi dd^*\sigma(\phi)\wedge(\sum_{k=0}^{m-1} \omega^{m-k-1} \wedge \omega_1^k)
        \\
        &{}- \frac{(m-1)^2}{V}\int_MJ\partial_-\sigma(\phi) \wedge \partial_-\sigma(\phi)\wedge(\sum_{k=0}^{m-1}\omega^{m-k-1}\wedge\omega_1^k).
        \end{aligned}
    \end{equation*}
    \end{enumerate}
\end{proposition}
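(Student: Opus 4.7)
The plan is to establish (i) by direct term-by-term cancellation in the defining integrals, and then to deduce (ii) from (i) using the single algebraic identity $\omega_1-\omega=\mathcal{D}_J^+(\phi)=dJd\phi+dd^*\sigma(\phi)$ together with a telescoping sum. For (i), I will compare $J'_\omega$ from \cref{defn:twistedAubin} with $\rnum{2}'_\omega$ from \cref{def:twistdonfun} line by line: the $\phi\,dJd\phi$ integrals appear in $\rnum{2}'_\omega$ and in $J'_\omega$ with opposite signs but the identical coefficient $\frac{m-k}{m+1}$ over the same range $k=0,\ldots,m-1$, and analogously for the $J\partial_-\sigma(\phi)\wedge\partial_-\sigma(\phi)$ integrals. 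Adding $J'_\omega+\rnum{2}'_\omega$ cancels both pairs of sums and leaves only the first summand $\frac{1}{V}\int_M\phi\,\omega^m$ of $\rnum{2}'_\omega$, which is (i).

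For (ii), identity (i) reduces the claim to showing $I'_\omega(\phi)-\frac{1}{V}\int_M\phi\,\omega^m$ equals the asserted right-hand side. I rewrite $dJd\phi=(\omega_1-\omega)-dd^*\sigma(\phi)$ and substitute into the first integral defining $I'_\omega(\phi)$, splitting it into a piece containing $\phi(\omega_1-\omega)$ and one containing $\phi\,dd^*\sigma(\phi)$. The first telescopes via
\[
\sum_{k=0}^{m-1}(\omega_1-\omega)\wedge\omega^{m-k-1}\wedge\omega_1^k=\omega_1^m-\omega^m,
\]
producing $-\frac{1}{V}\int_M\phi\,\omega_1^m+\frac{1}{V}\int_M\phi\,\omega^m$, and the $\omega^m$ contribution cancels the $-\frac{1}{V}\int_M\phi\,\omega^m$ carried over from (i). The second piece is exactly the middle term on the right-hand side of (ii), and the $J\partial_-\sigma(\phi)\wedge\partial_-\sigma(\phi)$ integral already present in $I'_\omega$ transfers directly to the last term.

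The main obstacle I foresee is purely bookkeeping: the summation ranges and wedge exponents in the $J\partial_-\sigma(\phi)\wedge\partial_-\sigma(\phi)$ pieces in \cref{defn:twistedAubin} and in the proposition statement differ by one at the top index, so one must match them either by checking that the boundary index $k=m-1$ vanishes for degree reasons or by reconciling with the paper's convention for the summation range. Aside from this indexing care, no new analytic input is needed; the entire argument reduces to elementary wedge-algebra arithmetic combined with the single substitution for $dJd\phi$ and the telescoping identity.
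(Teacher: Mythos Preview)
The paper states this proposition without supplying a proof, so there is no argument to compare against; your plan is the natural one and it is correct. Part (i) is exactly the term-by-term cancellation you describe: the $\phi\,dJd\phi$ and the $J\partial_-\sigma(\phi)\wedge\partial_-\sigma(\phi)$ sums in \cref{def:twistdonfun} and in \cref{defn:twistedAubin} carry identical weights $\tfrac{m-k}{m+1}$ with opposite signs, leaving only $\tfrac{1}{V}\int_M\phi\,\omega^m$. Part (ii) then follows from (i) by your substitution $dJd\phi=(\omega_1-\omega)-dd^*\sigma(\phi)$ and the telescoping identity $\sum_{k=0}^{m-1}(\omega_1-\omega)\wedge\omega^{m-1-k}\wedge\omega_1^k=\omega_1^m-\omega^m$, with the $\sigma$-term of $I'_\omega$ passed through unchanged.

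Your instinct about the bookkeeping obstacle is right, but the discrepancy is slightly worse than a boundary index: in \cref{defn:twistedAubin} (and likewise in \cref{def:twistdonfun}) the $\sigma$-integrand carries $\omega_1^{k}\wedge\omega^{m-k-2}$, which has total degree $2m-2$ and therefore cannot be the intended top form. Comparing with \cref{lem:5} and with the statement of (ii), the exponent should read $\omega^{m-k-1}$. After that correction the remaining mismatch between the upper limits $m-2$ (in the definition of $I'_\omega$) and $m-1$ (in the displayed right-hand side of (ii)) is a further inconsistency in the paper; your derivation simply transports the $\sigma$-term of $I'_\omega$ verbatim, so whatever range appears in $I'_\omega$ is what belongs in (ii). None of this affects the logic of your proof.
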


%%%%%%%%%%%%%%%%%%%%%%%%%%%%%%%%%%%%%%%%%%%%%%%%%%%%%%%%%%%%%%%%%%%%%%
\section*{Acknowledgement}
%%%%%%%%%%%%%%%%%%%%%%%%%%%%%%%%%%%%%%%%%%%%%%%%%%%%%%%%%%%%%%%%%%%%%%

We gratefully thank Qiang Tan for insightful communications. The first-named author would like to  thank his advisor Z. L\"{u} for encouragement and support.

\appendix

% Enter appendix text:

\bibliographystyle{paper}
\bibliography{\bib}
\Addresses

\end{document}